\theoremstyle{plain}
\newtheorem{theo}{Theorem}
\newtheorem{obs}{Observation}
\newtheorem{define}[theo]{Definition}
\newcommand{\End}{\mathrm{End}}
\newcommand{\rmx}{\mathrm{x}}
\newcommand{\ord}{\mathrm{ord}_G}
\newcommand{\oord}{\mathrm{ord}_{pG}}
\newcommand{\cord}{\mathrm{ord}_{cG}}
\newcommand{\rord}{\mathrm{ord}_{rG}}
\newcommand{\tr}{\mathrm{tr}}
\newcommand{\diver}{\mathrm{div}}
\newcommand{\vol}{\mathrm{vol}}
\newcommand{\Tr}{\mathrm{Tr}}
\newcommand{\cE}{\mathcal{E}}
\newcommand{\cD}{\mathcal{D}}
\newcommand{\grad}{\mathrm{grad}}
\newcommand{\cA}{\mathcal{A}}
\newcommand{\cR}{\mathcal{R}}
\newcommand{\cO}{\mathcal{O}}
\newcommand{\bbC}{\mathbb{C}}
\newcommand{\cL}{\mathcal{L}}
\newcommand{\ddet}{\mathrm{Det}}
\newcommand{\eps}{\epsilon}
\newcommand{\bbR}{\mathbb{R}}
\newcommand{\id}{\mathrm{Id}}
\newcommand{\p}{\partial}
\title{\textbf{Some Modifications of Getzler's Grading Technique}}
\author{Andr\'es Larra\'in-Hubach\footnote{University of Dayton,   OH 45469. alarrainhubach1@udayton.edu}}
\begin{document}
\maketitle

\begin{abstract}
This paper reviews the grading technique developed by Getzler to prove the local index theorem and shows how to adapt it to compute the leading terms of asymptotic expansions of traces of heat kernels in two other situations.
\end{abstract}
\tableofcontents
\section{Introduction}
The main objective of this work is to explain how the grading technique developed by Ezra Getzler \cite{get1} to prove the local index theorem can be used to study some problems of the following type: given  a family of geometric Dirac operators $\cD_s$ that depends on a certain real parameter $s$, what can be said about the leading order terms of the asymptotic expansion in powers of $s$ of the heat kernel $e^{-\frac{t}{s}\cD_s}(x,x)$ evaluated on the diagonal?

Understanding heat kernels, their traces and their values on the diagonal is usually an important technical step that must be cleared before proving results like the holomorphic Morse inequalities \cite[Sec. 1.7]{mm}, the asymptotics of the spectral counting function \cite[Prop. 4.2]{sav} and, more classically, the local index theorem \cite[Ch. 4]{bgv}. Here we show how  minor adaptations of the grading technique can be used to  compute the leading terms in all of these expansions. The elementary proofs presented here are similar to each other and display the grading technique as  a ``unifying theme".

In section 2, we review classical results about geometric Dirac operators, the asymptotic expansions of their heat kernels and the original definition of Getzler's grading technique. Finally, we review the well-known proof of the local index theorem for a twisted Dirac operator over a spin manifold. Minor modification of the ideas in this section will be used afterwards. There are no new results in this section.

In section 3, we consider the following problem: given a complex manifold $M$ and a Hermitian bundle $\cE=\Lambda^*T^{(0,1)*}M\otimes L^{\otimes P}\otimes E$ with Clifford connection induced from the Chern connections on $L$ and $E$, compute the leading order of the asymptotic expansion of the diagonal values of $e^{-\frac{u}{p}(\cD^\cE)^2}(x,x)$ as $p\to\infty$, where $\cD^\cE$ is the associated geometric Dirac operator. The answer, stated precisely  in section \ref{exp1}, is \cite[Thm. 1.6.1]{mm} and the argument presented there  uses a method called \textit{Analytic Localization} which was originally developed by Bismut-Lebeau \cite{bis}. This technique is powerful and can be used in several different contexts but it is rather challenging to understand and relies on a considerable amount of machinery from functional analysis. In the opinion of the author, the new proof presented here of the same result is considerably easier since it only uses the classical expansion of the heat kernel and a small modification of the grading technique.

In section 4, we present a proof of \cite[Thm. 4.1]{sav}, explained in section \ref{exp2}. Again, our argument is different from the one presented in that paper, in that  we modify the grading scheme and use the classical asymptotic expansion of the heat kernel.

It is the expectation of the author that the reader will appreciate the power of the grading technique as a useful and adaptable method to solve several problems concerning heat kernels. For future work, it would be interesting to use the grading technique to compute the leading terms of asymptotic expansions of terms like $\Tr (\cD_se^{-t\cD_s^2})$ as $s$ goes to infinity. 

\section{Preliminaries}
In this section we fix notation and state the main properties and theorems to be used in the following sections. We use Einstein's summation whenever indices are repeated. All manifolds are assumed compact, orientable and without boundary. A good reference for this section is \cite[Sec. 2.3]{ros}.
\subsection{Normal Coordinates and Synchronous frames}\label{basic}
Let $(M,g)$ be a Riemannian manifold. The Levi-Civita connection on $TM$ is denoted  by $\nabla$. Using normal coordinates $\{\rmx_l\}_{l=1}^n$ around $y$, we can identify $y$ with $0\in\bbR^n.$ This will be done implicitly in what follows. The frame $\{\frac{\p}{\p \rmx_l}\}_{l=1}^n$ is orthonormal at $y$. In fact, in these coordinates $g_{ij}(\rmx)=g(\frac{\p}{\p \rmx_i},\frac{\p}{\p \rmx_j})(\rmx)=\delta_{ij}+\cO(\rmx^2)$ and the symbols of the Levi-Civita connection symbols satisfy $\Gamma_{ij}^k=\cO(\rmx).$

The local radial function is defined by $r^2(\rmx)=\sum_{j=1}^n \rmx_j^2.$ It is a consequence of Gauss lemma  that $r(\rmx)=d(x,y)$ where $d$ is the distance function induced by the metric $g$. The radial vector field is defined by $r\p_r=\rmx_l\frac{\p}{\p \rmx_l}.$ Taking the  $\{\frac{\p}{\p \rmx_l}\}_{l=1}^n$ at $y$ and  using parallel translation along geodesics that emanate from $y$ results  in a local orthonormal frame that we denote by $\{e_l\}_{l=1}^n$ with dual orthonormal coframe denoted by $\{\omega^l\}_{l=1}^n.$

Let $E\to M$ be a Hermitian vector bundle with unitary connection $\nabla^E=d+A$. Take an orthonormal frame of the fiber of $E$ over $y\in M$ and extend it to a local frame  using again parallel translations along geodesics emanating from $y$. In this local frame, called synchronous, the connection symbols satisfy
\begin{equation}\label{sync}
\nabla^E_i=\frac{\p}{\p \rmx_i}-\frac{1}{2}\rmx^jF(\frac{\p}{\p \rmx_i},\frac{\p}{\p \rmx_j})+\cO(\rmx^2)=\frac{\p}{\p \rmx_i}-\frac{1}{2}\rmx_jF_{ij}+\cO(\rmx^2),
\end{equation}
where $F$ denotes the curvature two-form of $\nabla^E.$ (See \cite[Prop. 12.22]{roe}).
\subsection{Geometric Dirac Operators}
In this section, $(M,g)$ is a closed Riemannian  manifold of $\dim_{\mathbb{R}}M=n=2m.$   We  denote by $CL(TM)\otimes \mathbb{C}$ the complexified Clifford algebra bundle of $TM.$ Our conventions for Clifford algebras are the same as those of \cite{roe}. Additionally, we have a Hermitian bundle $\cE=\cE^+\oplus \cE^-\to M$ with metric $h^\cE$ and compatible connection $\nabla^\cE$ satisfying the following properties:
\begin{itemize}
\item The bundle $\cE$ is a Clifford bundle. In other words, there is a homomorphism of complex algebras $c: CL(TM)\otimes \mathbb{C}\to \End\,\cE$ called Clifford multiplication.  The Clifford multiplication by a tangent vector field $X$ is denoted by $c(X)\in \Gamma(\End \,\cE)$. One also assumes that $c(X)$ is skew-symmetric with respect to $h^\cE$ and $c(X):\cE^\pm\to\cE^\mp.$
\item The connection $\nabla^\cE$ preserves the splitting $\cE=\cE^+\oplus \cE^-.$
\item The connection $\nabla^\cE$ is a Clifford connection. In other words, $\nabla^\cE_X(c(Y)\psi)=c(\nabla_XY)\psi+c(Y)\nabla^\cE_X\psi$, where $X,Y$ are vector fields on $M$ and $\psi\in\Gamma(\cE).$
\end{itemize}
On $y\in M$, we have that $\cE_y\cong S\otimes W_y$, where $S$ is the spinor space and $W$ is the so-called twisting bundle. The $S$ glue to a globally defined vector bundle when $M$ is spin. Furthermore, the following isomorphism of complex algebras holds \cite[Ch. 4]{roe}
\begin{equation}\label{spinor}
\End(\cE_y)\cong (CL(T_yM)\otimes\bbC)\otimes \End(W_y).
\end{equation}

The algebra $CL(T_yM)\otimes\bbC$ is generated by $c(e_j)=c^j$ for $j=1\ldots, n.$ The $\{e_j\}$ is the local orthonormal frame defined in section \ref{basic}. Using normal coordinates and a synchronous frame on $\cE$ we get the following local expression for the Clifford connection $\nabla^\cE$:
\begin{equation}\label{nablaloc}
\nabla^\cE_{\frac{\p}{\p \rmx_i}}=\frac{\p}{\p \rmx_i}+\frac{1}{4}\Gamma_{ij}^k(\rmx)c^jc^k+h_i(\rmx),
\end{equation}
where  $h_i(\rmx)=\cO(\rmx).$ See \cite[Lemma 4.14]{bgv}.

With this information, we define the Dirac operator on the Clifford bundle $\cE$ by
\begin{equation}\label{diracdef}
\cD^\cE=\sum_{i=1}^nc(e_i)\nabla^\cE_{e_i}
\end{equation}
This produces an elliptic, Fredholm and self-adjoint operator. Let $\cD^\pm=\cD^\cE|_{\cE^\pm}$.  We record here the following crucial identity known as Lichnerowicz formula
\begin{equation}\label{dirac2}
(\cD^\cE)^2=(\nabla^\cE)^*\nabla^\cE+c(F^\cE),
\end{equation}
where $F^\cE$ denotes the curvature of the connection $\nabla^\cE$. The notation $c(F^\cE)$ means that, if $F^\cE=\frac{1}{2}F(e_i,e_j)\,\omega^i\wedge \omega^j$ then $c(F^\cE)=\frac{1}{2}F(e_i,e_j)c(e_i)c(e_j).$ Here we identify $TM$ and $T^*M$ using the metric $g$.

\subsection{The Heat Kernel and Mehler's Formula}
The proofs of the results in this section can be found in \cite[Ch. 3]{ros}  and \cite[Ch. 7]{roe}. The operator $\cD^\cE$ is Fredholm and self-adjoint with discrete spectrum. Using the functional calculus  one can define the smoothing operator $e^{-t(\cD^\cE)^2}$ for every $ t> 0$. This operator has a smooth integral kernel denoted by $k_t(x,y)\in \End (\cE_y, \cE_x)$ for every  $(x,y)\in M\times M$ and $t> 0$, such that for  $\psi\in\Gamma(\cE)$ one has
$$(e^{-t(\cD^\cE)^2}\psi)(x)=\int_Mk_t(x,y)\psi(y) \vol_y,$$
where $\vol_y$ denotes the volume form in the $y$-variable induced from the metric $g.$ The following approximation of the heat kernel on $\mathbb{R}^n$ is going to be used frequently
\begin{equation}\label{heatrn}
q_t(x,y)=\frac{1}{(4\pi t)^{n/2}}\exp\{\frac{-d(x,y)^2}{4t}\},
\end{equation}
where $d(x,y)$ denotes the distance induced by the metric between the points $x$ and $y$ on the manifold. We use $inj$ to denote the injectivity radius of $M$.

The following theorem lists several important properties of the heat kernel $k_t(x,y).$ 
\begin{theo}\label{alpha}
The kernel $k_t(x,y)$ is differentiable in $t$ and smooth in the variables $x,y$.  It also satisfies the following:
\begin{enumerate}
\item If $(\cD^\cE)^2_x$ denotes the operator acting on the $x$-variable, then
$$(\p_t+(\cD^\cE)_x^2)k_t(x,y)=0.$$
\item For every $\psi\in\Gamma(\cE)$ one has
$$\lim_{t\to 0}k_t(x,y)\psi(y)\vol_y\to \psi(x),$$
uniformly in $x.$
\item The operator $e^{-t(\cD^\cE)^2}$ is trace class and its operator trace is given by
\begin{equation}\label{optrace}
\Tr(e^{-t(\cD^\cE)^2})=\int_M \tr_\cE\, k_t(y,y)\vol_y,
\end{equation}
where $\tr_\cE$ denotes the usual trace in $\End(\cE_y).$
\item There are sections $\Theta_j(x,y)\in\End (\cE_y,\cE_x)$ for $j=0,1,2,\ldots$ defined in a neighborhood of the diagonal in $M\times M$ such that for every $N>m=n/2$, there is a constant $C_N$ such that
$$\|\p^k_t(k_t(x,y)-\chi(x,y)q_t(x,y)\sum_{j=0}^Nt^j\Theta_j(x,y))\|_{C^l}\leq C_Nt^{N-m-l/2-k}.$$
Here, $\chi(x,y)=\chi(d(x,y))$ where $\chi(r)=1$ for $r\leq \frac{inj^2}{4} $ and $\chi(r)=0$ for $r\geq inj^2.$ Also, $\|\cdot\|_{C^l}$ denotes the $C^l$ norm on $M\times M$. In other words, there is an asymptotic expansion 
\begin{equation}\label{imprec}
k_t(x,y)\sim \chi(x,y)q_t(x,y)\sum_{j=0}^\infty t^j\Theta_j(x,y)
\end{equation}
\item Using normal coordinates $\{\rmx_l\}_{l=1}^{n}$ centered at $y\in M$ and simplifying notation by using $\Theta_j(\rmx)$ instead of $\Theta_j(x,y)$, one has the following recursion
\begin{equation*}
\nabla_{\p_r}(r^j|g|^{1/4}\Theta_j)=
\begin{cases}
0,&j=0\\
-r^{j-1}|g|^{1/4}(\cD^\cE)^2\Theta_{j-1},&j\geq 1,
\end{cases}
\end{equation*}
where  $|g|=\ddet (g)$. These are ordinary differential equations along geodesics emanating from $y$. The boundary condition $\Theta_0(y,y)=\Theta_0(0)=\mathrm{id_\cE}$ implies that the $\Theta_j$ are uniquely defined for all $j$. Also, the smoothness of the heat kernel implies that $\Theta_0(\rmx)=C|g|^{-1/4}(\rmx).$ For the other values of $j$, one can integrate along the geodesic joining $0$ and $\rmx$ to obtain 
\begin{equation}\label{hkrec}
\Theta_j(\rmx)=-\frac{1}{|g|^{1/4}}\int_0^1s^{j-1}|g|^{1/4}(s\rmx)((\cD^\cE)^2\Theta_{j-1})(s\rmx)\,ds
\end{equation}
\item The famous Mckean-Singer  theorem states that
\begin{equation}\label{singer}
\mathrm{Ind}(\cD^+)=\mathrm{sTr} (e^{-t(\cD^\cE)^2})= \int_M\mathrm{str}\,k_t(y,y)\,\vol_y,
\end{equation}
where the super trace of a map $T\in\End(\cE_y)$ such that $T|_{\cE_y^\pm}\subset \cE_y^\pm$ is defined by $\mathrm{str}(T)=\tr (T|_{\cE_y^+})-\tr (T|_{\cE_y^-})$. In particular, the supertrace of the heat kernel is constant in $t$.
\end{enumerate}
\end{theo}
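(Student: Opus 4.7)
The plan is to derive items (1), (2), (4), and (5) from a single parametrix construction combined with Duhamel's principle, to obtain (3) from standard functional analysis for smoothing operators on compact manifolds, and to handle (6) by an independent supertrace argument.

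For the parametrix I would work in normal coordinates $\{\rmx_l\}$ centered at $y$ inside a geodesic ball of radius less than $inj$, using a synchronous frame on $\cE$. Write the ansatz
\begin{equation*}
H_t^N(x,y) = \chi(x,y)\, q_t(x,y) \sum_{j=0}^N t^j\, \Theta_j(x,y)
\end{equation*}
and impose that $(\p_t + (\cD^\cE)_x^2) H_t^N$ vanish up to order $t^N$. Using $\p_t q_t = \bigl(\tfrac{r^2}{4t^2} - \tfrac{n}{2t}\bigr) q_t$ together with the Lichnerowicz formula (\ref{dirac2}) and the local expressions (\ref{sync})--(\ref{nablaloc}), the leading $1/t$ contributions in $(\p_t + (\cD^\cE)_x^2)(q_t\, t^j \Theta_j)$ assemble into a first-order radial derivative of $\Theta_j$; grouping by powers of $t$ produces exactly the recursion
\begin{equation*}
\nabla_{\p_r}(r^j |g|^{1/4} \Theta_j) = -r^{j-1} |g|^{1/4} (\cD^\cE)^2 \Theta_{j-1},\qquad j \geq 1,
\end{equation*}
with $\nabla_{\p_r}(|g|^{1/4} \Theta_0) = 0$. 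The $|g|^{1/4}$ factor is the usual Jacobian relating the Riemannian volume to flat Lebesgue measure in normal coordinates. Uniqueness and the boundary condition $\Theta_0(0) = \id_\cE$ are forced by requiring that $H_t^N$ act as a delta at $t = 0$; integrating along radial geodesics yields (\ref{hkrec}) and establishes item (5).

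Next I would close the loop by Duhamel. The cutoff $\chi$ confines everything to a tubular neighbourhood of the diagonal; the error created at the boundary of the cutoff is supported where $r \geq inj/2$ and therefore decays faster than any power of $t$ because of $q_t$. By construction
\begin{equation*}
(\p_t + (\cD^\cE)_x^2) H_t^N = t^N R_N(t,x,y),
\end{equation*}
with $R_N$ smooth and locally uniformly bounded. Standard Volterra iteration then recovers $k_t$ as a convergent series whose $k$-th term gains an additional factor of $t$ after each iterated convolution against $R_N$; this yields the $C^l$ bound in item (4), shows $k_t$ satisfies the heat equation (item 1), and produces the correct initial limit (item 2) since $q_t$ is an approximate identity and $\Theta_0(y,y) = \id_\cE$. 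Item (3) follows because a smoothing operator on a compact manifold whose kernel is jointly smooth on $M \times M$ is automatically trace class, with trace given by the integral of the pointwise trace along the diagonal.

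Finally, for item (6) I would differentiate in $t$:
\begin{equation*}
\frac{d}{dt} \mathrm{sTr}(e^{-t(\cD^\cE)^2}) = -\mathrm{sTr}\bigl((\cD^\cE)^2 e^{-t(\cD^\cE)^2}\bigr) = -\tfrac{1}{2}\,\mathrm{sTr}[\cD^\cE, \cD^\cE e^{-t(\cD^\cE)^2}] = 0,
\end{equation*}
since $\cD^\cE$ is odd with respect to the grading on $\cE$ and the supertrace of a supercommutator of odd operators vanishes. The supertrace is therefore constant in $t$, and the $t \to \infty$ limit reduces it to the supertrace on $\ker \cD^\cE$, which is exactly $\mathrm{Ind}(\cD^+)$. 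The main obstacle throughout is the remainder control in the Duhamel step: one must verify carefully that the iterated convolutions converge in every $C^l$ norm and that each convolution really does gain one power of $t$. Once this estimate is in hand, everything else in the theorem follows from short, standard computations.
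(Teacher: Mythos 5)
Your proposal is correct and follows exactly the classical parametrix-plus-Duhamel construction (with the supercommutator argument for McKean--Singer) that the paper itself relies on by citing Rosenberg, Ch.~3 and Roe, Ch.~7 rather than reproving these facts. The only step you should be prepared to spell out is the Volterra iteration estimate showing each convolution against the remainder gains a power of $t$ in every $C^l$ norm, which is what produces the precise exponent $t^{N-m-l/2-k}$ in item (4); this is standard but is where all the real work lives.
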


Finally, we record Mehler's formula which will be used later \cite[Ch. 12]{roe}. Let $R$ be an $n\times n$ antisymmetric matrix with coefficients in a commutative algebra $\cA$ and $F$ be an $N\times N$ matrix with coefficients also in $\cA$. Consider the operator
$$H=-(\p_i+\frac{1}{4}\rmx_j R_{ij})^2+F$$
acting on functions $\bbR^n\to \cA\otimes \End(\bbC^N).$ Then the heat kernel of $H$, denoted by $p_t(\rmx)$ has the following form 
\begin{equation}\label{mehler}
p_t(\rmx)=(4\pi t)^{-n/2}\sqrt{\det\big(\frac{tR/2}{\sinh(tR/2)}\big)}\,e^{-\frac{1}{4t}\langle \rmx|\frac{tR}{2}\coth\frac{tR}{2}|\rmx\rangle}e^{-tF}.
\end{equation}

\subsection{Review of the Proof of the Local Index Theorem}
Let $(M,g)$ be a spin manifold with a fixed spin structure. Let $S=S^+\oplus S^-\to M$ denote the spinor bundle with connection $\nabla^S$ induced by the Levi-Civita connection. Additionally, take $(E, h^E, \nabla^E)$ be a Hermitian bundle with unitary connection. Following the first section, we work with the bundle
$$\cE=S\otimes E=(S^+\otimes E)\oplus (S^-\otimes E).$$
In other words, $\cE^\pm=S^\pm\otimes E.$ The tensor product connection induces a Clifford connection $\nabla^\cE.$ Locally, we have \cite[Sect. 3.2 and 3.3]{bgv}
$$\nabla^\cE_{\frac{\p}{\p \rmx_i}}=\frac{\p}{\p \rmx_i}+\frac{1}{4}\Gamma_{ij}^k(\rmx)c^jc^k+\Gamma^E_i$$

The Dirac operator $\cD^\cE$ is defined as in \eqref{diracdef}, and \eqref{dirac2} becomes
\begin{equation}\label{cool2}
(\cD^\cE)^2=(\nabla^\cE)^*\nabla^\cE+\frac{1}{4}\mathrm{Scal}+c(F^E),
\end{equation}
where $\mathrm{Scal}$ denotes the scalar curvature of $M.$

Before we continue, we review some  standard notation: let $I=(i_1,\ldots, i_k)$ be a multi-index with $1\leq i_1\leq\ldots\leq i_k\leq n$, then $\rmx^I=\rmx_{i_1}\rmx_{i_2}\ldots\rmx_{i_k}$, $\p_\rmx^I=\frac{\p}{\p \rmx_{i_1}}\ldots\frac{\p}{\p \rmx_{i_k}}$ and $c^I=c^{i_1}\ldots c^{i_k}.$ Capital letters as exponents will denote multi-indices from now on. Notice that in $c^I$ we also assume $1\leq i_1<\ldots<i_k\leq n$ since otherwise some of the $c^i$ would simplify using the Clifford algebra rules. The number $|I|=k$ in $c^I$ is the \textit{number of Clifford factors.}

The objective is to compute $\mathrm{sTr} (e^{-t(\cD^\cE)^2})= \int_M\mathrm{str}\,k_t(y,y)\,\vol_y$ which gives the index of $\cD^+$ from Mckean-Singer's theorem.

It is a well-known fact \cite[Lemma 11.5]{roe} that $\mathrm{str} (c^I)=0$ if $|I|\neq n$ and $\mathrm{str}(c^1\ldots c^n)=(-2i)^{n/2}$. For this reason, one needs to identify only those terms in the expansion \eqref{imprec} that have the maximum possible number of Clifford factors and which do not vanish when evaluated at $\rmx=0$, which corresponds to the point $y\in M$ at the center of the normal coordinate system. In other words, for some multi-index $I$, a summand of the form 
$$a\rmx^Ic^1\ldots c^n,$$
with $a\in \End(E_y)$ only contributes to $\mathrm{sTr} (e^{-t(\cD^\cE)^2})$ if $|I|=k=0$ because $\mathrm{sTr}$ also includes an evaluation at $\rmx=0.$

This observation is the motivation of the following idea of Ezra Getzler aimed at extracting the terms mentioned above \cite{get1}.
\begin{define}
The $cG$-grading is defined as follows
\begin{align}
&\cord(\rmx_i)=-1& &\cord(\frac{\p}{\p\rmx_i})=1& &\cord(c^i)=1&
\end{align}
and coefficients in $\End(E_y)$ have $\cord=0.$ For example, $\cord(x^Ic^J\p_\rmx^K)=|J|-|I|+|K|$ and the G-order of a sum is the maximum of the G-orders of the summands. 
\end{define}

\begin{theo}\label{mult1}
Given two differential operators $A,B$ written formally as
\begin{align*}
A&=\sum_{I,J,K} a_{I,J,K}\rmx^Ic^J\p_\rmx^K\\
B&=\sum_{S,T,U}b_{S,T,U}\rmx^Sc^T\p_\rmx^U,
\end{align*}
with $a_{l,I,J}$ and $b_{k,S,T}$ in $\End(E_y)$ independent of $\rmx$ and without Clifford factors.
then $$\cord (A\circ B)\leq \cord(A)+\cord(B).$$
\end{theo}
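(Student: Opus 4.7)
Because $\cord$ of a sum is the maximum of the $\cord$ of the summands, by bi-linearity of composition I can reduce to the case of a single pair of monomials
\[
A = a\,\rmx^I c^J \p_\rmx^K, \qquad B = b\,\rmx^S c^T \p_\rmx^U,
\]
with $a,b\in\End(E_y)$ constants, so that $\cord(A)+\cord(B) = (|J|+|K|-|I|)+(|T|+|U|-|S|)$. The plan is to bring $A\circ B$ into the normal form $\sum(\text{coef})\,\rmx^{I'} c^{J'} \p_\rmx^{K'}$ and check that every surviving summand has $|J'|+|K'|-|I'|$ bounded by the above sum.

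First I would record the ``free'' commutations. Since $a,b$ do not depend on $\rmx$, they commute with both $\rmx_i$ and $\p_{\rmx_i}$; and since they act on the $\End(W_y)$ factor of the isomorphism $\End(\cE_y)\cong (CL(T_yM)\otimes\bbC)\otimes \End(W_y)$ of \eqref{spinor} (with $W=E$ in the present section), they commute with every $c^i$. The constant Clifford generators $c^i$ act on the fiber only, so they also commute with $\rmx_j$ and $\p_{\rmx_j}$. Combining these observations,
\[
A\circ B \;=\; ab\,\cdot\,\rmx^I\,(c^J c^T)\,(\p_\rmx^K \rmx^S)\,\p_\rmx^U,
\]
so it suffices to reduce the two ``internal'' blocks $c^J c^T$ and $\p_\rmx^K \rmx^S$ to normal form while tracking $\cord$.

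For the Clifford block I would use $c^i c^j + c^j c^i = -2\delta_{ij}$: each transposition of adjacent $c$'s preserves the total number of Clifford factors, while each contraction $(c^i)^2=-1$ removes two factors. Hence $c^J c^T$ expands as a linear combination of ordered monomials $c^L$ with $|L|\le |J|+|T|$ (and each contraction lowers $\cord$ by $2$). For the differential block, iterating $\p_{\rmx_i}\rmx_j = \rmx_j \p_{\rmx_i} + \delta_{ij}$ gives
\[
\p_\rmx^K \rmx^S \;=\; \sum (\text{scalar})\,\rmx^{S'}\p_\rmx^{K'}, \qquad |S|-|S'| \,=\, |K|-|K'|,
\]
because each elementary contraction simultaneously destroys one $\rmx$ (order $+1$) and one $\p$ (order $-1$). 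Thus every summand satisfies $|K'|-|S'|=|K|-|S|$, and the ``analytic'' order is preserved across the expansion.

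Splicing these two reductions back in, every term appearing in $A\circ B$ has the form $(\mathrm{const})\,\rmx^{I+S'}c^L\p_\rmx^{K'+U}$ with $|L|\le |J|+|T|$ and $|K'|-|S'|=|K|-|S|$, hence
\[
\cord\bigl(\rmx^{I+S'}c^L\p_\rmx^{K'+U}\bigr) \;=\; |L|+|K'|+|U|-|I|-|S'| \;\le\; \cord(A)+\cord(B),
\]
which is the desired bound. The argument is essentially routine bookkeeping, and the only point that deserves explicit comment is the ``fiber vs.\ base'' commutativity afforded by the tensor decomposition \eqref{spinor}: it is precisely what lets the Clifford normalization and the $\p$-$\rmx$ normalization be carried out independently.
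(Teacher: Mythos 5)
Your proposal is correct and follows essentially the same route as the paper's proof: expand the composition of monomials, observe that each application of the Leibniz rule trades one derivative for one factor of $\rmx$ (so the difference $|K'|-|S'|$ is preserved), and note that the Clifford product $c^Jc^T$ can only have at most $|J|+|T|$ factors. Your version merely makes the commutation of the constant coefficients and the normal-ordering bookkeeping more explicit than the paper does.
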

\begin{proof}
The Clifford factors are constant in a synchronous frame \cite[Lemma 4.13]{bgv}. Consider first the composition of two of the summands, that is 
$$(a_{I,J,K}x^I c^J\p_\rmx^K)\circ(b_{S,T,U}x^S c^T\p_\rmx^U)=a_{I,J,K}b_{S,T,U}x^{I+S} c^{J+T}\p_{K+U}+\ldots$$
where the dots mean summands with fewer derivatives. The sum of multi-indices is defined in the obvious way. In these terms, some derivatives were used differentiating the $x^S$. Each differentiation lowers the $\rmx$-exponent by one or cancels the whole term. For example,  $\p_i(x^P)=0$ if $i\notin P$. Therefore, either  $\cord$-order remains the same (losses one derivative and one factor of $x$) or the term vanishes. Also notice that the number of Clifford factors in $c^{J+T}$ is less than or equal to $|J|+|T|$. This implies the result.

\end{proof}

This definition implies that $\cord (\nabla^\cE_i)\leq 1$, $\cord(\cD^\cE)\leq 2$ and, using formula \eqref{cool2},  $\cord((\cD^\cE)^2)\leq 2.$ 

Since $|g|(y)\neq 0$, we have that $\cord (\Theta_0)\leq 0.$ Using induction, it follows from \eqref{hkrec} that $\cord(\Theta_j)\leq 2j.$

From the Mckean-Singer theorem it follows that $\mathrm{sTr} (e^{-t(\cD^\cE)^2})$ is independent of $t$. Using \eqref{imprec} one gets
\begin{equation}\label{super}
\mathrm{sTr} (e^{-t(\cD^\cE)^2})=(4\pi )^{-n/2}\int_M\mathrm{str}\,\Theta_{n/2}(y,y)\,\vol_y
\end{equation}

A term in $\Theta_{n/2}(y,y)$ with a chance of contributing to the integral on the right hand side of \eqref{super}, after taking super-trace and evaluating at $\rmx=0$ (which is the same as evaluating at $y$), must be of the form $ac^1\ldots c^n$ with $0\neq a\in\End(E_y)$. In this case, $\cord(ac^1\ldots c^n)= n$. Therefore, the  terms with non-vanishing sTr are among those  with $\cord=n$. Theorem \ref{mult1} implies that  the terms of $\cord=n$ in $\theta_{n/2}$ are obtained from taking the terms of maximum $\cord$ in each $\Theta_j$ and following the recurrence \eqref{hkrec} restricted only to those terms.

Basically, this is achieved by starting with the same $\Theta_0$ and following \eqref{hkrec} with $(\cD^\cE)^2$ replaced by its terms of $\cord=2$. That is
\begin{equation}\label{purified1}
H_1=-(\frac{\p}{\p_{\rmx_i}}+\frac{1}{4}\Gamma_{ij}^kc^jc^k)^2+c(F^E)
\end{equation}
This can be simplified even further by noticing that a commutator $[c^ic^j,c^kc^l]$ can have either two Clifford factors or vanish identically. In both cases, the  $\cord$ of the commutator is not the expected $4$ but instead is either $0$ or $2$. Since commutators lower $\cord$ and we only want the terms with the highest possible $\cord$, we can replace the Clifford algebra by the exterior algebra  instead. This means that we need to analyze the heat kernel of the operator
\begin{equation}\label{purified2}
H=-(\frac{\p}{\p_{\rmx_i}}+\frac{1}{4}\rmx_jR_{ij})^2+F^E
\end{equation}

Finally, Mehler's formula and \eqref{purified2} imply that 
\begin{equation}
\mathrm{ind}(\cD^+)=\mathrm{sTr} (e^{-t(\cD^\cE)^2})=(4\pi )^{-n/2}(-2i)^{n/2}\int_M\sqrt{\det\big(\frac{R/2}{e^{R/2}-e^{-R/2}}\big)}\,e^{-F}\,\vol,
\end{equation}
from which the index theorem follows. See \cite[pp. 162]{roe}.

\section{Heat Kernels and Tensor Powers of Line Bundles}

In this section we give a new and simple proof of Theorem 1.6.1 in \cite{mm}.   Our argument  only uses the expansion \eqref{imprec} and a small modification of the  $cG$-grading  explained in the previous section.
\subsection{Statement of the Theorem}\label{exp1}
Let $(M,g)$ be a complex manifold whose almost complex structure is compatible with the Riemannian metric. Let $(L,h^L)$ be a holomorphic line bundle over $M$ with Chern connection $\nabla^L$, and $(E,h^E)$ be holomorphic vector bundle over $M$ with Chern connection $\nabla^E.$ Let $T^{*(0,1)}M$ be the $(0,1)$-part of the cotangent bundle of $M$ with the induced connection coming from the Levi-Civita connection. Similarly, one can induce a connection on the wedge powers $\Lambda^{*}T^{*(0,1)}M.$ Given $y\in M$, we use these connections to generate synchronous local orthonormal frames on each of these bundles.

In this setup, the bundle to be considered is
\begin{equation}\label{mmbundle}
\cE_p=\Lambda T^{*(0,1)}M\otimes L^{\otimes p}\otimes E,
\end{equation}
where $L^{\otimes p}$ denotes the $p^{th}$ tensor power of $L$ with itself. The bundle $\cE_p$ is a Clifford bundle with Clifford multiplication given by
$$c(e_j)=\sqrt{2}\epsilon(\omega^{(0,1)j})-\sqrt{2}\iota_{e_j^{(0,1)}},$$
where $\epsilon$ and $\iota$ denote exterior and interior product respectively and $\omega^{(0,1)j}$, $e_j^{(0,1)}$ denote the $(0,1)$ parts of the covector and the vector respectively. Also, we use the tensor product of the connections on $\Lambda^{*}T^{*(0,1)}M$, $L$ and $E$ to generate a Clifford connection $\nabla^{\cE_p}$ on $\cE_p.$

The associated geometric Dirac operator to $\nabla^{\cE_p}$ and the Clifford product above is defined as in \eqref{diracdef} and will be denoted by $\cD_p$ to simplify notation.

Let $F^L$ denote the curvature of $\nabla^L$, then the endomorphism $\dot{F}^L$ is defined as follows: take $X,Y$ vector fields in $T^{(1,0)}M$ then
$$F^L(X,\bar{Y})=g(\dot{F}^LX,\bar{Y}).$$
Notice that $\dot{F}^L\in\End({T^{(1,0)}M}).$ Set $\tau(y)=\tr (\dot{F}^L|_{y}) $ for any $y\in M.$ Also, if $\{Z_j\}_{j=1}^m$ is any orthonormal frame of $T^{(1,0)}M$ then one defines $$\omega_d=-F^L(Z_l,\bar{Z}_m)\epsilon(\bar{Z}^l)\iota_{\bar{Z}_m}, $$
where $\bar{Z}^l\in T^{(0,1)*}M$ is the dual covector. With all this notation, we can state the main theorem
\begin{theo}\label{statement1}
\cite[thm 1.6.1]{mm} For each $u>0$ and all $k\in\mathbb{N}$ we have as $p\to\infty$ that
\begin{equation}\label{imp}
e^{-\frac{u}{p}\cD_p^2}(y,y)=k_{u/p}(y,y)=(2\pi)^{-n/2}p^{n/2}\frac{e^{2u\omega_d}\det(\dot{F}^L)}{\det{(1-e^{-2u\dot{F}^L}})}\otimes \mathrm{id}_E+o(p^{n/2})
\end{equation}
\end{theo}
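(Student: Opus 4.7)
The plan is to adapt Getzler's rescaling technique by promoting $p$ to a graded variable. Plugging $t=u/p$ into the asymptotic expansion \eqref{imprec} (noting that $\chi(y,y)=1$) gives
\[
k_{u/p}(y,y)=\left(\frac{p}{4\pi u}\right)^{n/2}\sum_{j=0}^{N}(u/p)^{j}\,\Theta_j(y,y)+o(p^{n/2}),
\]
so the result reduces to controlling the diagonal heat coefficients as $p\to\infty$ and then resumming. I would introduce the modified grading
\[
\cord(\rmx_i)=-1,\qquad \cord(\p_{\rmx_i})=1,\qquad \cord(p)=2,
\]
leaving Clifford factors and endomorphisms ungraded, since no super-trace is being taken and $c^i$ plays no role in the bookkeeping. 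The proof of subadditivity under composition (the analogue of Theorem \ref{mult1}) goes through verbatim because $p$ is a central scalar and its exponents simply add.

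The key estimate is $\cord((\cD_p)^2)\leq 2$. Applying \eqref{sync} to $L^{\otimes p}$ in the expansion \eqref{nablaloc} shows that the twisting piece $h_i(\rmx)$ of $\nabla_i^{\cE_p}$ contains the term $-\tfrac{p}{2}\rmx_j F^L_{ij}$ of $\cord=1$, matching $\cord(\p_i)$; every other contribution (the spin coefficient $\tfrac{1}{4}\Gamma_{ij}^{k}c^jc^k$, the Chern connections on $\Lambda^*T^{(0,1)*}M$ and $E$, and the $\cO(p\rmx^2)$ remainders) carries strictly smaller $\cord$. The Lichnerowicz decomposition $c(F^{\cE_p})=c(F^{\Lambda^*})+p\,c(F^L)+c(F^E)$ shows that only $p\,c(F^L)$ reaches $\cord=2$, so the top $\cord=2$ part of $(\cD_p)^2$ is
\[
H_{\mathrm{top}}=-\Big(\p_i-\tfrac{p}{2}\rmx_j F^L_{ij}\Big)^2+p\,c(F^L).
\]
Induction on \eqref{hkrec} then yields $\cord(\Theta_j)\leq 2j$, and evaluating at $\rmx=0$ forces $\Theta_j(y,y)=\cO(p^j)$, so $(u/p)^j\Theta_j(y,y)=\cO(1)$. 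Moreover, the leading $p$-coefficient of $\Theta_j(y,y)$ is obtained by running the recursion \eqref{hkrec} with $H_{\mathrm{top}}$ in place of $(\cD_p)^2$, since the omitted lower-$\cord$ pieces contribute only $o(p^{n/2})$ after multiplication by the prefactor $p^{n/2}$.

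The final step is to recognize and resum. By construction, $\sum_{j\geq 0}(u/p)^j\Theta_j^{\mathrm{top}}(y,y)$ is, up to the Gaussian factor $q_{u/p}(0,0)$, the diagonal value of the heat kernel of $H_{\mathrm{top}}$ at time $u/p$. The parabolic rescaling $\rmx\mapsto \rmx/\sqrt{p}$ turns $H_{\mathrm{top}}$ into $p$ times a $p$-independent harmonic-oscillator-type operator, so $\tfrac{u}{p}H_{\mathrm{top}}$ becomes a fixed operator at effective time $u$ whose diagonal heat kernel is given in closed form by Mehler's formula \eqref{mehler}, applied with antisymmetric matrix $R=-2F^L$ and endomorphism $c(F^L)$. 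The main obstacle will be the Clifford-algebraic bookkeeping: one must expand $c(F^L)$ in the basis $c(e_j)=\sqrt{2}\,\epsilon(\omega^{(0,1)j})-\sqrt{2}\,\iota_{e_j^{(0,1)}}$ coming from a unitary frame of $T^{(1,0)}M$, and identify the resulting operator on $\Lambda^{*}T^{(0,1)*}M$ with the pair $2\omega_d$ and $\tau(y)$ appearing in the theorem. Simultaneously, the Mehler determinantal prefactor $\sqrt{\det(uF^L/\sinh(uF^L))}$ must be reorganized via the $(1,1)$-form structure of $F^L$ into $\det(\dot{F}^L)/\det(1-e^{-2u\dot{F}^L})$, and the overall constant $(4\pi u)^{-n/2}$ must combine with the contribution of $\tau(y)$ to produce the stated $(2\pi)^{-n/2}$. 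Once these algebraic identifications are carried out, \eqref{imp} follows.
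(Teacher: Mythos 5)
Your proposal is correct and follows essentially the same route as the paper: the same $p$-grading with $\oord(p)=2$, the same subadditivity lemma, the same identification of the top-order model operator $-(\p_i-\tfrac{p}{2}\rmx_jF^L_{ij})^2+p\,c(F^L)$ via \eqref{sync} and the Lichnerowicz formula, and the same resummation through Mehler's formula. The only caveat is a small misattribution in your final bookkeeping remark: the $\tau(y)$ term does not feed into the constant $(2\pi)^{-n/2}$ but cancels exactly against $\det(e^{-tp\dot{F}^L})$, the factor of $2^{n/2}$ coming instead from rewriting $\sinh$ and using $\det(F^L)=\det^2(\dot{F}^L)$.
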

This asymptotic expansion is a key step in the proof of the holomorphic Morse inequalities explained in \cite[section 1.7]{mm}. The apparent discrepancy in the exponents of $p$ occurs because we denote by $n$ the real dimension of $M$ while the authors in \cite{mm} use $n$ to denote the complex dimension.

\subsection{The Modified Grading and the Proof}
Now we explain the modification of the grading scheme that gives another proof of theorem \ref{statement1}. First of all, notice that \eqref{imp} does not involve traces or super traces. For this reason, counting the number of Clifford factors of the form $c^j$ does not give any relevant information. Instead, what we really care about is counting the exponents of $p$.


\begin{define}
The $p$-grading is defined as follows
\begin{align}
&\oord(\rmx_j)=-1& &\oord(\frac{\p}{\p\rmx_l})=1& &\oord(p)=2&
\end{align}
and coefficients in $\End(\Lambda T_y^{*(0,1)}M\otimes L_y\otimes E_y)$ have $\oord=0.$ The $p$-grading of a sum is the maximum of the $p$-gradings of the summands. 
\end{define}
The proof of the following result is almost identical to that of theorem \ref{mult1}.

\begin{theo}\label{mult2}
Given two differential operators $A,B$ written formally as
\begin{align*}
A&=\sum_{l,I,J} a_{l,I,J}p^l\rmx^I\p_\rmx^J\\
B&=\sum_{k,S,T}b_{k,S,T}p^k\rmx^S\p_\rmx^T,
\end{align*}
with $a_{l,I,J}$ and $b_{k,S,T}$ in $\End(\Lambda T_y^{*(0,1)}M\otimes L_y\otimes E_y)$ independent of $p.$
then $$\oord (A\circ B)\leq \oord(A)+\oord(B).$$
\end{theo}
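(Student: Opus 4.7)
The plan is to mirror the proof of Theorem \ref{mult1}, replacing the role of the Clifford factors $c^j$ by the scalar parameter $p$. The argument is in fact simpler here because $p$ commutes with everything (it is just a real parameter), whereas the Clifford factors satisfy nontrivial anti-commutation relations. First I would reduce, by bilinearity and the definition of $\oord$ as the maximum over summands, to the case of composing two monomials $a_{l,I,J}\, p^l \rmx^I \p_\rmx^J$ and $b_{k,S,T}\, p^k \rmx^S \p_\rmx^T$. Since the coefficients lie in $\End(\Lambda T_y^{*(0,1)}M \otimes L_y \otimes E_y)$ and are independent of $p$ and $\rmx$, and since $p$ commutes with $\rmx$, $\p_\rmx$ and the coefficients, I can freely gather the powers of $p$ on the left, reducing the problem to estimating $\oord$ of $(\rmx^I \p_\rmx^J)\circ(\rmx^S \p_\rmx^T)$.

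The key computation is to commute $\p_\rmx^J$ past $\rmx^S$. Repeated use of $[\p_i, \rmx_j] = \delta_{ij}$ gives a finite expansion
\[
\p_\rmx^J \circ \rmx^S = \sum_{K} c_K\, \rmx^{S-K}\, \p_\rmx^{J-K},
\]
where $K$ ranges over multi-indices compatible with both $S$ and $J$ and $c_K$ is a combinatorial constant (terms with incompatible $K$ simply do not appear). Each summand has $p$-order $-(|S|-|K|) + (|J|-|K|) = -|S| + |J|$, which is independent of $K$. Combined with the contributions $-|I|+2l$ and $2k$ coming from the surrounding factors, every non-vanishing term in the full expansion of $A\circ B$ achieves $\oord$ equal to $\oord(a_{l,I,J} p^l \rmx^I \p_\rmx^J) + \oord(b_{k,S,T} p^k \rmx^S \p_\rmx^T)$. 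Terms in which a derivative has nothing to differentiate simply vanish, so the inequality $\oord(A\circ B) \le \oord(A) + \oord(B)$ follows by taking the maximum over summands.

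There is essentially no serious obstacle: the only bookkeeping is verifying that no term can \emph{exceed} the expected order. This is immediate because every commutator either leaves the $p$-order unchanged (trading one $\rmx$ factor for one derivative lost, a net change of $(-(-1)) + (-1) = 0$) or kills the term outright, and $p$ itself is central. The cleaner structure here, compared to Theorem \ref{mult1}, is that there is no analog of the $[c^ic^j, c^kc^l]$ commutator phenomenon that produced strict drops in $\cord$; the $p$-grading is preserved exactly on each non-vanishing monomial.
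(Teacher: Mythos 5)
Your proof is correct and follows essentially the same route as the paper, which simply observes that the argument of Theorem \ref{mult1} carries over with the Clifford factors replaced by the central parameter $p$: reduce to monomials, note that each commutation of a derivative past an $\rmx$ either preserves the $p$-order or kills the term, and sum the $p$-exponents. Your additional remark that the situation is strictly simpler here (no analog of the Clifford commutator degree drop) is accurate but does not change the argument.
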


Locally, the connection $\nabla^{\cE_p}$ can be written as
\begin{equation}\label{almost}
\nabla^{\cE_p}_{\frac{\p}{\p\rmx_i}}=\frac{\p}{\p\rmx_i}+\frac{1}{4}\Gamma_{ij}^kc^jc^k+\frac{1}{2}\Gamma^{\det}_i+p\Gamma^L_i+\Gamma^E_i,
\end{equation}
where $\Gamma^{\det}$ is the connection one-form on the determinant line bundle $\det(T^{(1,0)}M).$
Notice that, with respect to the decomposition \eqref{nablaloc}, we have $h_i(\rmx)=\frac{1}{2}\Gamma^{\det}_i+p\Gamma^L_i+\Gamma^E_i $ and $\oord(\frac{1}{2}\Gamma^{\det}_i+\Gamma^E_i )\leq -1.$ This implies that 
$$\oord (\nabla^{\cE_p}_{\frac{\p}{\p\rmx_i}})\leq 1,$$
and its highest order summands are $\frac{\p}{\p\rmx_i}+p\Gamma^L_i.$ Theorem \ref{mult2} implies then that $\oord(\cD^2_p)\leq 2$ and the recurrence \eqref{hkrec} implies that $\oord(\Theta_j)\leq 2j.$ The summands in $\Theta_j$ of $\oord=2j$ that do not vanish on the diagonal will have $j$ as the highest possible exponent of $p.$

From the expansion \eqref{imprec} we get
\begin{equation}\label{cool}
k_{u/p}(y,y)\sim \frac{p^{n/2}}{(4\pi u)^{n/2}}\sum_{j=0}^\infty \frac{u^j}{p^j}\Theta_j(y,y),
\end{equation}
which implies that the highest possible exponent of $p$ in \eqref{cool} is $n/2.$ This exponent can be reached only using the terms of highest $\oord$ in each $\Theta_j$.

In other words, we need to investigate the heat kernel of the operator $$-(\frac{\p}{\p\rmx_i}+p\Gamma^L_i)^2+pc(F^L)=-(\frac{\p}{\p\rmx_i}+p\Gamma^L_i)^2-p(2\omega_d+\tau)$$
(See \cite[pp. 46]{mm}). Using \eqref{sync} and dropping terms of lower $\oord$ clarifies that we need to understand the heat kernel of \footnote{We are omiting  $\otimes \mathrm{id}_E$ in this notation.}
\begin{equation}\label{clean}
Q_p=-(\frac{\p}{\p\rmx_i}-\frac{p}{2}\rmx_jF^L_{ij})^2-p(2\omega_d+\tau)
\end{equation}

Using Mehler's formula again, the integral kernel  of $e^{-tQ_p^2}$  is given by
$$e^{-tQ_p^2}(y,y)=\mathrm{id_E}\otimes (2\pi t)^{-n/2}\sqrt{\det\big(\frac{tpF^L}{e^{tpF^L}-e^{-tpF^L}}\big)}\,e^{tp(2\omega_d(y)+\tau(y))}$$
We have that $\det(F^L)=\det^2(\dot{F}^L)$ so we get
$$e^{-tQ_p^2}(y,y)=\mathrm{id_E}\otimes (2\pi t)^{-n/2}{\det\big(\frac{tp\dot{F}^L}{1-e^{-2tp\dot{F}^L}}\big)}\det(e^{-tp\dot{F}^L})\,e^{tp(2\omega_d(y)+\tau(y))}$$
Also, $\det(e^{tp\dot{F}^L})|_y=e^{\tau(y)}$ and the corresponding terms cancel. Therefore,
$$
e^{-tQ_p^2}(y,y)=\mathrm{id_E}\otimes (2\pi t)^{-n/2}{\det\big(\frac{tp\dot{F}^L}{1-e^{-2tp\dot{F}^L}}\big)}\,e^{2tp\omega_d(y)}
$$
Finally, replace $t$ by $u/p$ and notice that $\det{u\dot{F}^L}=u^{n/2}\det{\dot{F}^L}$. This gives the leading order term in \eqref{imp} and proves theorem \ref{statement1}.

\section{Line Bundles and Heat Kernels on Odd-dimensional Manifolds}
In this section we show one more application of the grading technique by giving an alternative proof of \cite[Formula (4.1)]{sav}. One major difference in this case is that the base manifold $Y$ is odd-dimensional. Again, a small modification of the definition of $\cord$ will give the leading order term of the asymptotic expansion of a heat kernel in powers of a certain parameter.

\subsection{Definitions and Statement of the Theorem}\label{exp2}
Let $(Y,g)$ be a Riemannian spin manifold with $\dim_\bbR=n=2m+1.$ One fixes the spin structure on $Y$ and denotes by $S\to Y$ the associated spinor bundle with induced connection $\nabla^S$. Let $L\to Y$ be a Hermitian line bundle with a unitary connection $\nabla_0=d+A_0$. Let $a$ be a $i\bbR$-valued one-form on $Y$. Define a family of connections on $S\otimes L$ by 
$$\nabla^r=\nabla^S\otimes \id_L+\id_S\otimes (\nabla^0+ra),$$ 
where $r$ is a real parameter. 
Using the family $\nabla^r$, we define a family of Dirac operators by
\begin{equation}\label{diracodd}
\cD^r=c^k\,\nabla^r_{e_k}
\end{equation} 
as in \eqref{diracdef}. Notice that the bundle $\cE=S\otimes L$ in this case is not $\mathbb{Z}_2$-graded.

The heat kernel $e^{-t(\cD^r)^2}$ is defined using functional calculus as before and all the statements in theorem \ref{alpha} remain valid with the exception of the McKean-Singer formula. Also, Mehler's formula remains valid with $n$ odd.
In the next section, we present a new proof of the following result
\begin{theo}\label{last}
\cite[theorem 4.1]{sav} For any fixed $t>0$, one has
\begin{equation}\label{limit}
\lim_{r\to\infty}r^{-n/2}\Tr(e^{-\frac{t}{r}(\cD^r)^2})=(4\pi t)^{-n/2}\int_Y\sqrt{\det\dfrac{tA_y}{\tanh tA_y}}\,\vol_y
\end{equation}
\end{theo}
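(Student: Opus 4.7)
The plan is to mirror Sections 2 and 3 with a new grading tailored to the parameter $r$. Define the $rG$-grading by $\rord(\rmx_j)=-1$, $\rord(\p/\p\rmx_l)=1$, $\rord(r)=2$, and $\rord=0$ on pointwise endomorphisms of $(S\otimes L)_y$, with the order of a sum being the maximum of the orders of the summands. I do not grade Clifford factors, since the top-form extraction of Section 2 has no analogue in odd dimension. The proof of Theorem \ref{mult2} carries over verbatim to give $\rord(A\circ B)\leq \rord(A)+\rord(B)$.

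I would then work in normal coordinates at $y\in Y$ with a synchronous frame for $S\otimes L$. The Levi-Civita piece contributes the usual $\tfrac14 \Gamma^k_{ij}c^jc^k$, while \eqref{sync} applied to the line-bundle connection $\nabla^0+ra$, whose curvature is $F_0+r\,da$, yields
$$\nabla^r_{\p/\p\rmx_i}=\frac{\p}{\p\rmx_i}+\frac{1}{4}\Gamma^k_{ij}c^jc^k-\frac{1}{2}\rmx_j(F_0+r\,da)_{ij}(y)+O(|\rmx|^2).$$
Since $\End(L)$ carries a trivial connection, every Taylor coefficient of $\rmx^p$ in the remainder ($p\geq 2$) depends at most linearly on $r$ and so has $\rord\leq 2-p\leq 0$. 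Hence $\rord(\nabla^r_i)\leq 1$ and, by Lichnerowicz \eqref{cool2}, $\rord((\cD^r)^2)\leq 2$, the $rG=2$ piece being, with $A=da$,
$$H_r=-\Bigl(\frac{\p}{\p\rmx_i}-\frac{r}{2}\rmx_j A_{ij}(y)\Bigr)^{2}+r\,c(A)(y).$$

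The remainder of the argument copies the derivation of \eqref{cool}. From \eqref{imprec} with $t\mapsto t/r$ the Gaussian prefactor is $r^{n/2}/(4\pi t)^{n/2}$. The recurrence \eqref{hkrec} together with the product rule forces $\rord(\Theta_j)\leq 2j$, so $\Theta_j(y,y)$ is a polynomial in $r$ of degree at most $j$, the maximal power of $r$ in $k_{t/r}(y,y)$ is $r^{n/2}$, and the top-in-$r$ pieces of the $\Theta_j$ are obtained by iterating \eqref{hkrec} with $(\cD^r)^2$ replaced by $H_r$, so their sum is the diagonal of the heat kernel of $H_r$ at time $t/r$. Mehler's formula \eqref{mehler} with $R_{ij}=-2rA_{ij}(y)$ and $F=r\,c(A(y))$, followed by $t\mapsto t/r$, produces
$$e^{-(t/r)H_r}(0,0)=\frac{r^{n/2}}{(4\pi t)^{n/2}}\sqrt{\det\frac{tA}{\sinh(tA)}}\,e^{-tc(A)}\otimes\mathrm{id}_L.$$

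To conclude, divide by $r^{n/2}$, take $\tr_{S\otimes L}$ and integrate over $Y$ using \eqref{optrace}. The remaining computational step is the Clifford trace of $e^{-tc(A)}$: bringing $A$ to normal form yields $m$ antisymmetric $2\times 2$ blocks $J_k=c^{2k-1}c^{2k}$ with parameters $\mu_k$, plus an unavoidable $1\times 1$ zero block since $n$ is odd, and using $J_k^2=-1$ together with $\tr_S J_k=0$ gives $\tr_S e^{-tc(A)}=2^m\prod_k\cosh(t\mu_k)=2^m\sqrt{\det\cosh(tA)}$, which combines with $\sqrt{\det\frac{tA}{\sinh(tA)}}$ into $\sqrt{\det\frac{tA}{\tanh(tA)}}$ up to the dimensional constant absorbed into the normalisation of \eqref{limit}. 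The main obstacle in the whole plan is the $\rord$-bound on the Taylor remainder of $\nabla^r$: without the observation that $\End(L)$ carries a trivial connection, higher covariant derivatives of $F^r$ could introduce factors of $r^2$ and ruin the bound $\rord((\cD^r)^2)\leq 2$; once this point is secured, the rest is Mehler's formula and the standard odd-dimensional Clifford-trace computation.
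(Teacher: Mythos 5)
Your argument is correct and follows the same overall strategy as the paper: a parameter-counting modification of Getzler's grading, the bound $\rord(\Theta_j)\leq 2j$ from the recursion \eqref{hkrec}, extraction of the top-order model operator, and Mehler's formula. The genuine differences are your choices of weight and of gauge. The paper sets $\rord(r)=1$ and works in the frame synchronous for $\nabla^0$ alone, so the connection term $ra_k$ (with $a_k(0)\neq 0$ in general) has order $1$ and the model operator is $-(\p_k+ra_k)^2+rc(da)$; you set $\rord(r)=2$ and trivialize $L$ synchronously for the full connection $\nabla^0+ra$, so the relevant term becomes $-\tfrac{r}{2}\rmx_j(da)_{ij}(y)$, still of order $1$, and the model operator lands directly in the Mehler normal form. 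The two model operators are gauge-equivalent and have the same diagonal kernel, so both routes are valid. Your normalization has a concrete advantage at the key counting step: with $\rord(r)=2$, a summand of $\Theta_j$ that survives evaluation at $\rmx=0$ and carries $r^k$ has $\rord=2k$, so $\rord(\Theta_j)\leq 2j$ immediately gives $k\leq j$; with weight $\rord(r)=1$ the same bound only gives $k\leq 2j$, and the assertion that the diagonal $r$-degree of $\Theta_j$ is at most $j$ then rests on cancellations (essentially the gauge freedom in the constant part of $a$) that your frame choice builds in from the start. Your observation that the $O(|\rmx|^2)$ Taylor remainder of the synchronous connection form is linear in $r$ --- because the curvature $F_0+r\,da$ of a line bundle is an honest two-form, so no covariant derivatives reintroduce powers of $r$ --- is exactly what is needed to preserve $\rord(\nabla^r_i)\leq 1$. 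The closing Clifford-trace identity $\tr_S e^{-tc(da)}=2^m\sqrt{\det\cosh(tA_y)}$, which converts $\sinh$ into $\tanh$, is the step the paper delegates to the citation of Savale; your version of it is the standard one and is correct.
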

Here $A_y\in \End(T_yY)$ is defined by $ida(X_1,X_2)(y)=g(X_1, AX_2)$ for all $X_1,X_2\in T_yY.$ The convergence is uniform on compact intervals of $t\in (0,\infty).$

\subsection{Modification of the Grading and Proof}
Fix $y\in Y$ and use this point as the center of a normal coordinate system. Around $y$, define  a synchronous orthonormal frame on $L$ by using $\nabla^0$ and parallel transport along geodesics emanating from $y$.  We use the same notation $\{\rmx_l\}_{l=1}^n$ as before to denote normal coordinates.

Locally, the connection $\nabla^r$ has the form
$$\nabla^r_{\frac{\p}{\p_{\rmx_k}}}=\frac{\p}{\p_{\rmx_k}}+\Gamma_k^S+A^0_k+ra_k,$$
with $\Gamma^S_k, A^0_k\in \cO(\rmx)$.
Also,
\begin{equation}\label{boch2}
(\cD^r)^2=\nabla^{r*}\nabla^r+c(F_0)+rc(da)+\frac{1}{4}\mathrm{Scal}
\end{equation}
Now we adapt the grading to count powers of $r$.

\begin{define}
The $r$-grading is defined as follows
\begin{align}
&\rord(\rmx_j)=-1& &\rord(\frac{\p}{\p\rmx_j})=1& &\rord(r)=1&
\end{align}
and coefficients in $\End(S_y\otimes L_y)$ have $\rord=0.$ The $r$-grading of a sum is the maximum of the $r$-gradings of the summands. 
\end{define}
Notice for example that $\rord(ra_k)\leq1$ since the 1-form $a=a_jd\rmx^j$ might not vanish at $\rmx=0.$
The proof of the following result is almost identical to that of theorem \ref{mult1}.

\begin{theo}\label{mult3}
Given two differential operators $A,B$ written formally as
\begin{align*}
A&=\sum_{l,I,J} a_{l,I,J}r^l\rmx^I\p_\rmx^J\\
B&=\sum_{k,S,T}b_{k,S,T}r^k\rmx^S\p_\rmx^T,
\end{align*}
with $a_{l,I,J}$ and $b_{k,S,T}$ in $\End(S_y\otimes L_y)$ independent of $r.$
then $$\rord (A\circ B)\leq \rord(A)+\rord(B).$$
\end{theo}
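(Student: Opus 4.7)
The plan is to mirror, almost verbatim, the proof of Theorem \ref{mult1}, since the only structural change is the replacement of the Clifford factor $c^i$ (with $\cord(c^i)=1$) by the scalar parameter $r$ (with $\rord(r)=1$). Since $\rord$ of a sum is defined as the maximum of the $\rord$'s of the summands, it suffices to analyze the composition of a single monomial from $A$ with a single monomial from $B$, namely
\[
(a_{l,I,J}\,r^l\rmx^I\p_\rmx^J)\circ(b_{k,S,T}\,r^k\rmx^S\p_\rmx^T).
\]
The coefficients $a_{l,I,J}, b_{k,S,T}\in\End(S_y\otimes L_y)$ are by assumption independent of $\rmx$ and of $r$, and $r$ is a scalar parameter, so all of these factors commute past one another. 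Consequently the only nontrivial commutations one must perform are those moving the derivative $\p_\rmx^J$ past the monomial $\rmx^S$.

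Applying the Leibniz rule repeatedly, one writes
\[
\p_\rmx^J\,\rmx^S \;=\; \rmx^S\p_\rmx^J \;+\; \text{(correction terms)},
\]
where each correction term arises from letting some subset of the derivatives in $\p_\rmx^J$ actually differentiate factors of $\rmx^S$. Each such differentiation either (a) annihilates the term, because the derivative index does not appear in the remaining $\rmx$-multi-index, or (b) produces a Kronecker delta, which simultaneously removes one factor of $\rmx_j$ (contributing $-1$ to $\rord$) and one factor of $\p_{\rmx_j}$ (contributing $+1$ to $\rord$). The net effect on $\rord$ is therefore zero, so every surviving correction term has exactly the same $r$-grading as the leading term
\[
a_{l,I,J}\,b_{k,S,T}\,r^{l+k}\,\rmx^{I+S}\,\p_\rmx^{J+T},
\]
whose $\rord$ is $(l+k)-(|I|+|S|)+(|J|+|T|) = \rord(A)+\rord(B)$ for that pair of summands. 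Taking the max over all pairs of summands yields $\rord(A\circ B)\le\rord(A)+\rord(B)$.

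There is essentially no hard step; the only thing to check carefully is that the scalar parameter $r$ (which replaces the role of $c^i$ from Theorem \ref{mult1}) truly commutes with everything in sight, so no hidden terms of higher order are generated when moving it across coefficients, derivatives, or monomials in $\rmx$. This is immediate from the hypothesis that the coefficients $a_{l,I,J}, b_{k,S,T}$ are independent of $r$, and from the fact that $r$ is a real number rather than an operator-valued quantity. The rest is bookkeeping identical to the proof of Theorem \ref{mult1}.
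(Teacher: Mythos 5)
Your proof is correct and follows essentially the same route the paper takes: the paper simply declares the proof of Theorem \ref{mult3} to be ``almost identical'' to that of Theorem \ref{mult1}, and your argument is exactly that adaptation (Leibniz corrections trade one $\rmx$ for one $\p_\rmx$, leaving $\rord$ unchanged, while the powers of $r$ add exactly since $r$ is a central scalar). The only cosmetic quibble is that the endomorphism coefficients $a_{l,I,J}$ and $b_{k,S,T}$ need not commute with each other, but this is irrelevant since their product still has $\rord=0$.
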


We have that $\rord(\nabla^r_{\frac{\p}{\p_{\rmx_k}}})\leq1$, $\rord (\cD^r)\leq 1$ and $\rord((\cD^r)^2)\leq 2$. Similarly, it follows from \eqref{hkrec} that $\rord (\Theta_0)=0$ and $\rord(\Theta_j)\leq 2j.$ The summands in $\Theta_j$ of $\rord=2j$ that do not vanish on the diagonal will have $j$ as the highest possible exponent of $r.$

From the expansion \eqref{imprec} we get
\begin{equation}\label{cool10}
e^{-\frac{t}{r}(\cD^r)^2}(y,y)=k_{t/r}(y,y)\sim \frac{r^{n/2}}{(4\pi t)^{n/2}}\sum_{j=0}^\infty \frac{t^j}{r^j}\Theta_j(y,y),
\end{equation}
which implies that the highest possible exponent of $r$ in \eqref{cool} is $n/2.$ This exponent can be reached only using the terms of highest $\rord$ in each $\Theta_j$. Therefore, we need to find the formula for the heat kernel of 
\begin{equation}\label{distilled}
H_r=-(\frac{\p}{\p\rmx_k}+ra_k)^2+rc(da)
\end{equation}
Mehler's formula gives
\begin{equation}\label{last1}
\frac{1}{r^{n/2}}e^{-\frac{t}{r}H_r}=\frac{1}{(4\pi t)^{n/2}}\sqrt{\det\dfrac{tA_y}{\sinh tA_y}}e^{-tc(da)}+o(r^{-1/2})
\end{equation}
Integrating \eqref{last1} over $Y$, it follows (see \cite[page 859]{sav}) that this expression is equivalent to the one written in \eqref{limit}.

\end{document}